\newtheorem{theorem}{Theorem}
\newtheorem{proposition}[theorem]{Proposition}
\newtheorem{lemma}[theorem]{Lemma}
\newtheorem{corollary}[theorem]{Corollary}
\theoremstyle{definition}
\newtheorem{remark}[theorem]{Remark}
\newcommand{\calL}{\mathcal{L}}
\newcommand{\Lpq}{L^{q}_p}
\newcommand{\Hpq}{\mathbb{H}^{2,q}_p([0,T])}
\begin{document}
\title{Quadratic transportation inequalities for SDEs with measurable drift} 
\author{Khaled Bahlali \and Soufiane Mouchtabih \and Ludovic Tangpi} 
\keywords{Quadratic transportation inequality, Stochastic differential equations, singular drifts, Sobolev regularity.}
\date{\today}
\subjclass[2010]{60E15, 60H20, 60J60, 28C20}

\begin{abstract}
	Let $X$ be the solution of the multidimensional stochastic differential equation
	\begin{equation*}
		dX(t) = b(t, X(t))\,dt + \sigma(t, X(t))\,dW(t)\, \quad X(0)=x \in \mathbb{R}^d
	\end{equation*}
	where $W$ is a standard Brownian motion.
	In our main result we show that when $b$ is measurable and $\sigma$ is in an appropriate Sobolev space, the law of $X$ satisfies a uniform quadratic transportation inequality.
\end{abstract}

\maketitle
\setcounter{equation}{0}


\section{Introduction and main results}
Throughout this work, we fix $d \in \mathbb{N} $ and $T$ a strictly positive real number.
Let $(\Omega, \mathcal{F}, P)$ be the canonical space of a $d$-dimensional Brownian motion denoted by $W$ and equipped with the $P$-completion of the raw filtration $\sigma(W_s, s\le t)$ generated by $W$.
That is $\Omega=C([0,T],\mathbb{R}^d)$ endowed with the supremum norm, and $W_t(\omega)=\omega(t)$.
Further denote by $\mathcal{P}(\Omega)$ the set of all Borel  probability measures on $\Omega$.
For $\mu,\nu\in\mathcal{P}(\Omega)$ define the (second order) Wassertein distance and the Kullback-Leibler divergence respectively by
\begin{equation*}
	\mathcal{W}_2(\mu,\nu) := \Big(\inf_\pi \int_{\Omega\times\Omega} \|\omega-\eta\|_\infty^2 \,\pi(d\omega,\eta)\Big)^{1/2}
	\quad\text{and}\quad
	H(\nu|\mu) :=\int\frac{d\nu}{d\mu}\log\frac{d\nu}{d\mu}\,d\mu
\end{equation*}
where the infimum is taken over all, probability measures $\pi$ on the product with first marginal $\mu$ and second marginal $\nu$, and we used the convention $d\nu/d\mu=+\infty$ if $\nu$ is not absolutely continuous w.r.t.~$\mu$.
Given a constant $C$, a probability measure $\mu$ is said to satisfy Talagrand's $T_2(C)$ inequality (or quadratic transportation inequality) if
\[
\mathcal{W}_2(\mu,\nu)\leq \sqrt{C H(\nu|\mu)} \text{ for all } \nu\in\mathcal{P}(\Omega).\]
This inequality was popularized in probability theory by the works of \citet{Talagrand96} and \citet{Marton96} on the concentration of measure phenomenon.
It has since found numerous applications, for instance to isoperimetric problems, to randomized algorithms \cite{Dubh-Panc}, or to quantitative finance \cite{ConcenRM,Lac-concent} and to various problems of probability in high dimensions \cite{Del-Lac-Ram_Concent,Massart,backward-chaos}.
We refer the reader e.g. to \citet{Ledoux01} for an overview, notably for the connection to the concentration of measures.
Transportation inequalities are also related to various other functional inequalities as Poincar\'e inequality, log-Sobolev inequality, inf-convolution and hypercontractivity, see \cite{Bob-Gen-Led01}, \cite{Otto-Vil00}.

Our objective is to investigate transportation inequalities for stochastic differential equations of the form
\begin{equation}
\label{eq:SDE intro}
	X(t) = x + \int_0^tb(s,X(s))\,ds + \int_0^t\sigma(s,X(s))\,dW(s)
	\quad\text{for } t\in[0,T], \,\, x \in \mathbb{R}^d.
\end{equation} 
under minimal regularity assumptions on the coefficients $b:[0,T]\times \mathbb{R}^d \to \mathbb{R}^d$ and $\sigma :[0,T]\times \mathbb{R}^d \to \mathbb{R}^{d\times d}$.
To state our main result, let us recall the following functional spaces.
For $p\ge1$ denote by $L^p_{\mathrm{loc}}([0,T]): = L^p_{\mathrm{loc}}([0,T]\times \mathbb{R}^d)$ the (Lebesgue) space of classes of locally integrable functions and for every $m_1 , m_2 \in \mathbb{N}$, let $W^{m_1, m_2}_p([0,T]):= W^{m_1, m_2}_p([0,T]\times \mathbb{R}^d)$ be the usual Sobolev space of weakly differentiable functions $f:[0,T]\times \mathbb{R}^d\to \mathbb{R}$ such that
\begin{equation*}
	\|f\|_{W^{m_1,m_2}_p}:=
	 \sum_{|\alpha|\le m_1}\|\partial^\alpha_t f\|_{L^p} + \sum_{|\alpha|\le m_2}\|\partial^\alpha_x f\|_{L^p} <\infty
\end{equation*}
where $\alpha$ is a multiindex. 
Denote by
$W^{m_1, m_2}_{p,\mathrm{loc}}([0,T])$ the space of weakly differentiable functions $f:[0,T]\times \mathbb{R}^d\to \mathbb{R}$ such that
\begin{equation*}
	\|f\|_{L^p_{\mathrm{loc}}} +
	 \sum_{|\alpha|\le m_1}\|\partial^\alpha_t f\|_{L^p_{\mathrm{loc}}} + \sum_{|\alpha|\le m_2}\|\partial^\alpha_x f\|_{L^p_{\mathrm{loc}}} <\infty.
\end{equation*}
Further let
$L^p_q([0,T]):= L^q([0,T], L^p(\mathbb{R}^d))$ be the space of (classes of) measurable functions $f:[0,T]\times \mathbb{R}^d \to \mathbb{R}^d$ such that
\begin{equation*}
	\|f\|_{L^q_p}:= \Big(\int_0^T\Big(\int_{\mathbb{R}^d}|f(s,x)|^p\,dx \Big)^{q/p}\,ds \Big)^{1/q}<\infty.
\end{equation*}
The aim of this note is to prove the following:
\begin{theorem}
\label{thm:t2 sde}
	Assume that one of the following sets of assumptions is satisfied:
	\begin{itemize}
		\item[(A)] $\sigma, b \in L^\infty([0,T]\times\mathbb{R}^d)$, the function $\sigma$ is continuous in $(t, x)$ and belongs to $W^{0,1}_{2(d+1),\mathrm{loc}}([0,T])$, there is $\lambda >0$ such that 
		\begin{equation}
		\label{eq:elliptic}
		\xi^*\sigma(t,x)\xi \ge \lambda |\xi|^2\quad \text{for all} \quad (t,x,\xi) \in [0,T]\times \mathbb{R}^d\times \mathbb{R}^d, \ \hbox{where $^*$ denotes the transpose, }
		\end{equation} 
	 and $T$ is small enough. 
		\vskip 0.2cm 
		\item[(B)] 	 $\sigma \in W^{0,1}_{2(d+1),\mathrm{loc}}([0,T]) \cap L^\infty([0,T]\times \mathbb{R}^d)$, 
		$\sigma$ is uniformly continuous in $x$ and there is $\lambda>0$ such that 
		$\sigma$ satisfies \eqref{eq:elliptic}. 
		The function $b$ satisfies $b \in L^p_q([0,T])$ for some $p,q$ such that $d/p + 2/q<1$, $2(d+1)\le p$ and $q>2$. 
	\end{itemize}
	Then, Equation \eqref{eq:SDE intro} admits a unique strong solution $X$ with continuous paths and
	\begin{equation*}
		\text{the law $\mu_x$ of $X$ satisfies $T_2(C)$} 
	\end{equation*}
	for some constant $C$ depending on the data, namely $\|b\|_{L^q_p},\|\sigma\|_\infty, T, x, d,p$ and $q$. 
\end{theorem}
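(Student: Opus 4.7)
The plan is to use a Zvonkin-type transformation to convert the singular SDE \eqref{eq:SDE intro} into an SDE whose coefficients are regular enough for the classical machinery (Girsanov plus the quadratic transportation inequality of Feyel--Üstünel for Wiener measure in sup-norm, or equivalently the result of Djellout--Guillin--Wu for Lipschitz SDEs) to apply, and then to transfer the inequality back to $X$ via the contraction principle for $T_2$.

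\paragraph{Step 1: Construction of the PDE corrector.}
For $\lambda>0$ to be chosen large, I would first solve the backward Kolmogorov-type PDE
\begin{equation*}
\partial_t u + \tfrac{1}{2}\mathrm{tr}\bigl(\sigma\sigma^{*}\,D^{2}u\bigr) - \lambda u = -b, \qquad u(T,\cdot)=0,
\end{equation*}
in the parabolic Sobolev space $\Hpq$ (with $p,q$ as in hypothesis (B), and with $q=p=2(d+1)$ sufficing in (A) since $b$ is bounded). Under the ellipticity \eqref{eq:elliptic} and the Sobolev assumption on $\sigma$, this is covered by the Krylov--Röckner / Xia--Zhang type $L^p_q$-theory, which yields a solution $u$ with $\nabla_x u\in C([0,T]\times\R^d)$ and $\|\nabla_x u\|_\infty\to 0$ as $\lambda\to\infty$. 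Choosing $\lambda$ large enough, the map $\Phi(t,x):=x+u(t,x)$ is a uniformly bi-Lipschitz $C^1$-diffeomorphism of $\R^d$ with $1/2\le \|\nabla_x\Phi\|\le 3/2$.

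\paragraph{Step 2: Itô--Krylov formula and the transformed SDE.}
Next I would apply the generalized Itô formula to $Y(t):=\Phi(t,X(t))$. The singular drift $b$ cancels by construction, leaving
\begin{equation*}
dY(t) = \tilde b(t,Y(t))\,dt + \tilde\sigma(t,Y(t))\,dW(t),\qquad Y(0)=\Phi(0,x),
\end{equation*}
with $\tilde b(t,y)=\lambda u(t,\Phi^{-1}(t,y))$ bounded, and $\tilde\sigma(t,y)=(I+\nabla_x u)\sigma\,(t,\Phi^{-1}(t,y))$ bounded, uniformly elliptic, and \emph{Lipschitz in $y$ uniformly in $t$} (this uses $\nabla_x u\in C$ together with the uniform continuity / Sobolev regularity of $\sigma$ and the bi-Lipschitz property of $\Phi$). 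Existence and uniqueness of a strong solution to the original SDE then follows from that of $Y$, establishing the first conclusion of the theorem.

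\paragraph{Step 3: $T_2$ for $Y$ and transfer to $X$.}
Because $\tilde\sigma$ is Lipschitz and uniformly elliptic, the Itô map $W\mapsto Y$ is Lipschitz from $(\Omega,\|\cdot\|_\infty)$ into itself via a standard Grönwall argument (first handle $\tilde b$ by a Girsanov change of measure, then use the Lipschitz dependence of the resulting SDE on the driving Brownian path). Since Wiener measure satisfies $T_2$ on $(\Omega,\|\cdot\|_\infty)$ by the Feyel--Üstünel theorem, the contraction principle (Lipschitz pushforward of a $T_2$ measure is $T_2$) gives $T_2$ for $\mathrm{Law}(Y)$. A second application of the contraction principle through the uniformly bi-Lipschitz map $\Phi^{-1}$ transfers $T_2$ to $\mu_x=\mathrm{Law}(X)$.

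\paragraph{Main obstacle.}
The routine ingredient—Grönwall for a Lipschitz SDE plus contraction of $T_2$—is standard. The crux is Step 1 and the verification in Step 2 that $\tilde\sigma$ is genuinely Lipschitz: one must combine $\nabla_x u\in \Lpq$ type bounds with the Sobolev regularity $\sigma\in W^{0,1}_{2(d+1),\mathrm{loc}}$ and morrey-type embeddings to upgrade the composition $(I+\nabla_x u)\sigma\circ\Phi^{-1}$ to a globally Lipschitz function. The smallness of $T$ in (A) and the Krylov--Röckner condition $d/p+2/q<1$ in (B) are precisely what is needed to make $\|\nabla_x u\|_\infty$ small enough for the invertibility of $\Phi$ and for the constants in the $T_2$ estimate to depend only on $\|b\|_{L^q_p}$, $\|\sigma\|_\infty$, $T$, $x$, $d$, $p$, $q$.
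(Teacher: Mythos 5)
Your overall architecture (Zvonkin transform to remove the singular drift, then a transportation inequality for the transformed equation, then transfer back through the bi-Lipschitz map $\Phi^{-1}$ via the contraction principle of Djellout--Guillin--Wu) matches the paper's. But there is a genuine gap at the heart of Steps 2--3: you claim that $\tilde\sigma=(I+\nabla_x u)\sigma\circ\Phi^{-1}$ is Lipschitz in $y$, and you hang the entire $T_2$ argument for $Y$ on that claim. Under the stated hypotheses this is false and cannot be repaired by Morrey embeddings. The coefficient $\sigma$ is only assumed to lie in $W^{0,1}_{2(d+1),\mathrm{loc}}\cap L^\infty$ and to be (uniformly) continuous; Morrey's embedding upgrades $\nabla_x\sigma\in L^{2(d+1)}_{\mathrm{loc}}$ only to local H\"older continuity of exponent $1-d/(2(d+1))<1$, never to Lipschitz. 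Likewise the Krylov--R\"ockner theory gives $D^2u\in L^q([0,T],L^p)$ and a \emph{bounded, H\"older} gradient $\nabla_x u$ when $d/p+2/q<1$, not a Lipschitz one. So the transformed diffusion coefficient is no more regular than Sobolev-plus-bounded, and the ``classical machinery for Lipschitz SDEs'' you want to invoke in Step 3 does not apply. (There is a secondary problem in Step 3 as well: even for Lipschitz $\tilde\sigma$, the It\^o map $W\mapsto Y$ of an equation with multiplicative noise is not a Lipschitz map of $(\Omega,\|\cdot\|_\infty)$ into itself, so the Feyel--\"Ust\"unel-plus-contraction route is not available; one must instead run the Girsanov coupling argument directly on the law of $Y$.)

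The missing ingredient is precisely what the paper isolates as Proposition \ref{prop:sde no drift}: a quadratic transportation inequality for SDEs whose diffusion coefficient is merely in $W^{0,1}_{2(d+1),\mathrm{loc}}([0,T])\cap L^\infty$ and uniformly elliptic. Its proof is the real work: one runs the Djellout--Guillin--Wu coupling $(X,Y)$ under the Girsanov measure $Q$, but since $|\sigma(s,X(s))-\sigma(s,Y(s))|$ admits no pointwise Lipschitz bound, one controls it through smooth approximations $\sigma_n\to\sigma$ in $W^{0,1}_{2(d+1)}$ combined with Krylov's estimate, and absorbs the resulting singular quadratic-variation-type term
\begin{equation*}
A^N(t)=\int_0^{t\wedge\tau^N}\int_0^1\big(|\partial_x\sigma(s,Z^\lambda_s)|^2+|\partial_x b(s,Z^\lambda_s)|\big)\,d\lambda\,ds
\end{equation*}
by a random time change $\gamma^N=(t+A^N)^{-1}$ before applying Gronwall. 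Without this (or an equivalent device), your reduction to the Lipschitz case does not close, and the theorem's conclusion for merely Sobolev $\sigma$ is not reached. I would recommend either proving such a proposition as a separate lemma or locating the argument in the literature, and then slotting your (otherwise correct) Zvonkin reduction on top of it.
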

Since $b$ is only assumed to be measurable, this result gives transportation inequality for \emph{singular} SDEs as $dX(t) = sgn(X(t))\,dt + \,dW(t)$, or for "regime switching" models as 
$$dX(t) = \big\{ b_1(t,X(t))1_A(X(t) ) + b_2(t,X(t))1_{ A^c}(X(t) \big\}\,dt + \sigma(t,X(t))\,dW(t) $$
with $A$ a measurable subset of $\mathbb{R}^d$. Other examples are discussed at the end of the article.

Regarding the related literature, \citet{Talagrand96} proved a quadratic transportation inequality for the multidimensional Gaussian distribution with optimal constant $C = 2$.
Using stochastic analysis techniques, notably Girsanov's theorem, Talagrand's work was then extended to Wiener measure on the path space by \citet{Fey-Ust04}.
The case of SDEs was first analyzed by \citet{Dje-Gui-Wu} using a technique based on Girsanov's transform that we also employ here.
Their results gave rise to an interesting literature, including the papers
\cite{Pal12, Uestuenel12} on SDEs driven by Brownian motion and \cite{Riedel, Sau} on SDEs driven by abstract Gaussian noise.
Note that all the aforementioned works on SDEs assume that the coefficients are Lipschitz-continuous or satisfy a dissipative condition.

The effort to extend results from \cite{Dje-Gui-Wu} to diffusions with non-smooth coefficients was started by \citet{t2bsde} where it is proved that $T_2(C)$ holds for \emph{one-dimensional} equations, if $b$ is measurable in space and \emph{differentiable} in time and $\sigma$ \emph{Lipschitz} continuous.
The idea of \cite{t2bsde} is based on a transformation that is tailor-made for the one-dimensional case.
The present note deals with the multidimensional case and further weakens the regularity requirements imposed in \cite{t2bsde}.
In this case we use Zvonkin's transformation, (a technique well-known in SDE theory) along with gradient estimates for singular, second order parabolic PDEs.
Note that considering multidimensional equations is, for instance, fundamental for applications to concentration and asymptotic results on \emph{interacting particle systems}, see e.g. \cite[Section 5]{Del-Lac-Ram_Concent} and the various examples we give in the final section.

The proof of Theorem \ref{thm:t2 sde} is given in the next section, and the final section presents some examples.

\section{Transportation inequalities}

\subsection{Equation with Sobolev coefficients.}
The goal of this section is to prove a quadratic transportation inequality for SDE \eqref{eq:SDE intro} when the coefficients belong to some Sobolev spaces. 
Along with gradient estimates for solutions of singular PDEs, this will be an essential building block for the proof of the main result.

\begin{proposition} 
\label{prop:sde no drift}
	Let $\sigma \in W^{0,1}_{2(d+1),\mathrm{loc}}([0,T])\cap L^\infty([0,T]\times \mathbb{R}^d)$ and $b \in W^{0,1}_{(d+1),\mathrm{loc}}([0,T])\cap L^\infty([0,T]\times \mathbb{R}^d)$.
Assume that there exists $\lambda >0$ such that $\sigma$ satisfies \eqref{eq:elliptic}. Then, equation \eqref{eq:SDE intro} admits a unique strong solution $X$, and
	\begin{equation*}
		\text{the law $\mu_x$ of $X$ satisfies $T_2(C)$} 
	\end{equation*}
	with $\displaystyle C = \inf_{0<\varepsilon<1}2\exp\Big(6\frac{C^2_{BDG}+\varepsilon}{\varepsilon(1 - \varepsilon)}T \Big)\frac{1}{1-\varepsilon}\|\sigma\|_\infty^2$ where $C_{BDG}$ is the universal constant appearing in Burkholder-Davis-Gundy inequality.
\end{proposition}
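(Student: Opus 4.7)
The plan is to adapt the Girsanov-coupling strategy pioneered by \citet{Dje-Gui-Wu}. Strong existence and uniqueness of the solution $X$ is a consequence of classical results on SDEs with Sobolev diffusion and bounded drift (Veretennikov, Zvonkin, Krylov-R\"ockner), so one focuses on the transportation inequality.

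Fix $\nu\ll\mu_x$. By the martingale representation theorem on the Brownian filtration, one writes the density as a Girsanov exponential
\[
\frac{d\nu}{d\mu_x}(X) = \exp\Bigl(\int_0^T h_s\cdot dW_s - \tfrac{1}{2}\int_0^T |h_s|^2\,ds\Bigr)
\]
for some predictable process $h$. Setting $dQ/dP$ equal to this random variable, Girsanov's theorem yields that under $Q$ the law of $X$ is $\nu$, the process $\tilde W_t := W_t - \int_0^t h_s\,ds$ is a Brownian motion, and the entropy identity $H(\nu|\mu_x) = \tfrac{1}{2}E_Q[\int_0^T|h_s|^2\,ds]$ holds. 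On the same probability space, introduce $Y$ as the unique strong solution of $dY_t = b(t,Y_t)\,dt + \sigma(t,Y_t)\,d\tilde W_t$ with $Y_0=x$; by pathwise uniqueness, $Y$ has law $\mu_x$ under $Q$, so $(X,Y)$ is a $Q$-coupling of $\nu$ and $\mu_x$ and therefore
\[
\mathcal{W}_2^2(\mu_x,\nu) \le E_Q\bigl[\|X-Y\|_\infty^2\bigr].
\]

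The heart of the argument is to establish
\[
E_Q\Big[\sup_{s\le t}|X_s-Y_s|^2\Big] \le \frac{6(C_{BDG}^2+\varepsilon)}{\varepsilon(1-\varepsilon)}\int_0^t E_Q\Big[\sup_{u\le s}|X_u-Y_u|^2\Big]\,ds + \frac{\|\sigma\|_\infty^2}{1-\varepsilon}E_Q\Bigl[\int_0^T |h_s|^2\,ds\Bigr],
\]
so that Gronwall's lemma combined with the entropy identity gives $\mathcal{W}_2^2(\mu_x,\nu)\le C\,H(\nu|\mu_x)$ upon infimizing over $\varepsilon\in(0,1)$. Starting from the decomposition $X_t - Y_t = \int_0^t[b(X)-b(Y)]\,ds + \int_0^t[\sigma(X)-\sigma(Y)]\,d\tilde W + \int_0^t \sigma(X)h\,ds$, I would split with Young's inequality of parameter $\varepsilon$ to separate the $d\tilde W$-martingale from the drift-like terms, apply Burkholder-Davis-Gundy to the stochastic integral (producing the $C_{BDG}^2$), and use Cauchy-Schwarz together with $\|\sigma\|_\infty$ on the $\sigma(X)h$ term (producing the entropy pre-factor $\|\sigma\|_\infty^2/(1-\varepsilon)$), leaving the $|b(X)-b(Y)|$ and $|\sigma(X)-\sigma(Y)|$ contributions to feed the Gronwall-absorbing integral.

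The main technical obstacle is exactly this last step: since $b$ and $\sigma$ are only weakly differentiable, pointwise Lipschitz bounds are unavailable. I would exploit the Sobolev representation $f(X)-f(Y)=(X-Y)\int_0^1 \nabla f(Y+\lambda(X-Y))\,d\lambda$ combined with Krylov's a priori estimate for non-degenerate diffusions to control $\int_0^T|\nabla b|^2\,ds$ and $\int_0^T|\nabla\sigma|^2\,ds$ along the coupling trajectory. The delicate point is that the resulting constant must collapse into a pure dependence on $\|\sigma\|_\infty$, $T$, and $C_{BDG}$, without visible traces of the Sobolev norms of $b$ and $\sigma$; this will likely require smoothing $(b,\sigma)$ by mollification, proving the estimate with uniform constants for the smooth approximants, and passing to the limit via lower semicontinuity of the $T_2$ inequality under weak convergence of the laws.
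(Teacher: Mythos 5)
Your overall architecture --- Girsanov coupling, entropy identity, It\^o's formula plus Young/BDG/Cauchy--Schwarz, and the interpolation $f(X)-f(Y)=(X-Y)\int_0^1\partial_x f(\lambda X+(1-\lambda)Y)\,d\lambda$ handled through Krylov's estimate after mollification --- is exactly the paper's. The gap is at the step you yourself flag as delicate: making the gradient norms of $b$ and $\sigma$ disappear from the final constant. Your proposed fix (mollify, prove the estimate ``with uniform constants for the smooth approximants'', pass to the limit by lower semicontinuity of $T_2$) does not work as stated. A Gronwall argument applied to the mollified equation produces a factor of the form $\exp\bigl(c\int_0^T\int_0^1(|\partial_x\sigma_n(Z_s^\lambda)|^2+|\partial_x b_n(Z_s^\lambda)|)\,d\lambda\,ds\bigr)$ with $Z^\lambda=\lambda X+(1-\lambda)Y$; the exponent is a random variable controlled only in $L^1(Q)$ via Krylov's estimate, so its exponential moment is not controlled, and if you instead bound $|\partial_x\sigma_n|$, $|\partial_x b_n|$ in $L^\infty$ you get the Lipschitz constants of the mollifications, which blow up as $n\to\infty$. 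Either way the constants are \emph{not} uniform in $n$, and there is nothing to pass to the limit with.

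The paper's resolution, which is the one ingredient you are missing, is a random time change. Set $A^N(t):=\int_0^{t\wedge\tau^N}\int_0^1(|\partial_x\sigma(s,Z_s^\lambda)|^2+|\partial_x b(s,Z_s^\lambda)|)\,d\lambda\,ds$ with $\tau^N$ a localizing stopping time, and let $\gamma^N$ be the inverse of the strictly increasing clock $k^N(t)=t+A^N(t)$. After mollifying and invoking Krylov's estimate to pass to the limit, all the troublesome terms are dominated by $\int_0^{\gamma_t^N\wedge\tau^N}|X-Y|^2\,dk^N(s)$, and the substitution $s\mapsto\gamma_s^N$ turns this into $\int_0^t\sup_{r\le\gamma_s^N\wedge\tau^N}|X_r-Y_r|^2\,ds$, so that Gronwall runs on the new clock with slope $6(C_{BDG}^2+\varepsilon)/\varepsilon$, independent of the derivatives of $b$ and $\sigma$. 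Letting $t\to\infty$ and then $N\to\infty$ (using $\gamma_t^N\uparrow\infty$, $\tau^N\uparrow T$ and Fatou's lemma) recovers the bound on $E_Q[\sup_{[0,T]}|X-Y|^2]$ with the stated constant. Without this device, or an equivalent stochastic-Gronwall argument, your plan stalls exactly where you predicted it would.
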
 

The proof of this proposition follows a coupling argument introduced in \cite{Dje-Gui-Wu}.
The main challenge here being the lack of regularity of the coefficients $\sigma$ and $b$.
We start by a Lemma whose proof can be found in Step 1 of the proof of \cite[Theorem 5.6]{Dje-Gui-Wu}.
\begin{lemma}
\label{lem:girsanov}
	Let $\nu \in \mathcal{P}(\Omega)$ be such that $\nu \ll \mu_x$ and $H(\nu|\mu_x)<\infty$, and let $X$ be the solution of \eqref{eq:SDE intro}.
	Then, the probability measure $\nu$ given by 
	\begin{equation*}
		Q:= \frac{d\nu}{d\mu_x}(X)P
	\end{equation*}
	satisfies 
	\begin{equation}
	\label{eq:identity H}
		H(\nu|\mu_x) = E_Q\Big[\frac12\int_0^T|q(s)|^2\,ds \Big]
	\end{equation}
	for some progressively measurable, square integrable process $q$ such that
	$\tilde W := W-\int_0^\cdot q(s)\,ds$ is a $Q$-Brownian motion.
\end{lemma}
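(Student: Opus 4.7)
The proposition makes two assertions: strong existence and uniqueness of $X$, and the transportation inequality $T_2(C)$ with an explicit constant. For strong well-posedness, the hypotheses on $\sigma$ (bounded, uniformly elliptic, $\nabla\sigma\in L^{2(d+1)}_{\mathrm{loc}}$) and on $b$ (bounded, $\nabla b\in L^{d+1}_{\mathrm{loc}}$) fall within the scope of Krylov--R\"ockner/Zvonkin-style strong well-posedness theorems for SDEs with Sobolev diffusion and bounded Sobolev drift, so I would simply quote the appropriate result. For the transportation inequality, the plan is to follow the Girsanov coupling of \citet{Dje-Gui-Wu}.

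Fix $\nu\ll\mu_x$ with $H(\nu|\mu_x)<\infty$ and apply Lemma \ref{lem:girsanov} to produce the measure $Q$, the process $q$, and the $Q$-Brownian motion $\tilde W=W-\int_0^\cdot q\,ds$ satisfying $H(\nu|\mu_x)=\tfrac12 E_Q\int_0^T|q|^2\,ds$. Under $Q$ the canonical process $X$ has law $\nu$ and solves $dX_t=[b(t,X_t)+\sigma(t,X_t)q_t]\,dt+\sigma(t,X_t)\,d\tilde W_t$. On the same probability space I would let $Y$ be the pathwise-unique strong solution, guaranteed by the well-posedness part above, of $dY_t=b(t,Y_t)\,dt+\sigma(t,Y_t)\,d\tilde W_t$ with $Y_0=x$; its $Q$-law is $\mu_x$. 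Then $(X,Y)$ realises a coupling of $\nu$ and $\mu_x$ under $Q$, so $\mathcal{W}_2^2(\nu,\mu_x)\le E_Q\|X-Y\|_\infty^2$ and the task is to bound the right-hand side by $C\,H(\nu|\mu_x)$.

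Writing $Z_t:=X_t-Y_t$, the decomposition
\[
Z_t=\int_0^t[b(s,X_s)-b(s,Y_s)]\,ds+\int_0^t\sigma(s,X_s)q_s\,ds+\int_0^t[\sigma(s,X_s)-\sigma(s,Y_s)]\,d\tilde W_s,
\]
combined with Young's inequality $|a+b+c|^2\le\tfrac{1}{1-\varepsilon}|c|^2+\tfrac{2}{\varepsilon}(|a|^2+|b|^2)$ for a free $\varepsilon\in(0,1)$, Burkholder--Davis--Gundy on the stochastic integral, Cauchy--Schwarz on the two Lebesgue integrals, the bound $|\sigma(X)q|^2\le\|\sigma\|_\infty^2|q|^2$, and the identity $E_Q\int_0^T|q|^2\,ds=2H(\nu|\mu_x)$, reduces the problem to controlling $E_Q\int_0^t|f(s,X_s)-f(s,Y_s)|^2\,ds$ for $f\in\{b,\sigma\}$. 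Since $b$ and $\sigma$ are not Lipschitz, the natural substitute is the Sobolev pointwise inequality
\[
|f(u)-f(v)|\le c_d|u-v|\bigl(\mathcal M|\nabla f|(u)+\mathcal M|\nabla f|(v)\bigr),\qquad f\in W^{1,p}_{\mathrm{loc}},
\]
together with a Krylov occupation-time bound $E_Q\int_0^T g(s,X_s)\,ds\lesssim\|g\|_{L^q_p}$ available under $Q$ thanks to the ellipticity of $\sigma$ and the integrability of the total drift. The Sobolev exponents $2(d+1)$ and $d+1$ in the hypotheses are calibrated precisely so that $\mathcal M|\nabla\sigma|$ and $\mathcal M|\nabla b|$ are controlled in norms to which this Krylov bound applies, producing an integrated Lipschitz-type estimate $E_Q\int_0^t|f(X_s)-f(Y_s)|^2\,ds\le K_f\int_0^t E_Q\|Z\|^2_{\infty,[0,s]}\,ds$ for $f\in\{b,\sigma\}$.

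Inserting these bounds into the decomposition yields a Gronwall inequality of the shape $\phi(t)\le \tfrac{2T\|\sigma\|_\infty^2}{\varepsilon}\cdot 2H(\nu|\mu_x)+\tfrac{6(C_{BDG}^2+\varepsilon)}{\varepsilon(1-\varepsilon)}\int_0^t\phi(s)\,ds$ for $\phi(t):=E_Q\|Z\|^2_{\infty,[0,t]}$, from which the integral form of Gronwall's lemma and optimisation in $\varepsilon\in(0,1)$ deliver $T_2(C)$ with the stated constant. The hard part is the Sobolev-to-Lipschitz passage: verifying that Krylov's bound survives the Girsanov perturbation $\sigma q$ with $q$ only square-integrable, and, more delicately, that the Sobolev substitutes for the Lipschitz constants absorb cleanly into the universal factors of the stated $C$ without introducing any spurious dependence on $\|\nabla b\|$ or $\|\nabla\sigma\|$.
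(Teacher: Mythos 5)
Your proposal does not prove the statement it was asked to prove. The statement is Lemma \ref{lem:girsanov}: given $\nu\ll\mu_x$ with $H(\nu|\mu_x)<\infty$, the measure $Q:=\frac{d\nu}{d\mu_x}(X)P$ admits a progressively measurable, square-integrable $q$ such that $\tilde W=W-\int_0^\cdot q(s)\,ds$ is a $Q$-Brownian motion and $H(\nu|\mu_x)=E_Q\big[\tfrac12\int_0^T|q(s)|^2\,ds\big]$. What you have written is instead an outline of the proof of Proposition \ref{prop:sde no drift} (the coupling argument, the It\^o decomposition of $X-Y$, the Sobolev-to-Lipschitz passage via maximal functions and Krylov's estimate, and the Gronwall step). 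Worse, your argument explicitly \emph{invokes} Lemma \ref{lem:girsanov} in its second paragraph to produce $Q$, $q$ and $\tilde W$, so as a proof of the Lemma it is circular: the object to be constructed is assumed.

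What is actually needed for the Lemma is an entropy--Girsanov argument on Wiener space: set $D:=\frac{d\nu}{d\mu_x}(X)$, note that since $X$ has law $\mu_x$ under $P$ one has $H(\nu|\mu_x)=E_P[D\log D]=E_Q[\log D]$; introduce the positive $P$-martingale $D_t:=E_P[D\,|\,\mathcal F_t]$ and use the predictable representation property of the Brownian filtration to write $D_t$ as a stochastic exponential $\mathcal E\big(\int_0^\cdot q(s)\,dW(s)\big)_t$ for some progressively measurable $q$; Girsanov's theorem then makes $\tilde W$ a $Q$-Brownian motion, and expanding $\log D_T=\int_0^T q\,d\tilde W+\tfrac12\int_0^T|q|^2\,ds$ and taking $E_Q$ (with a localization/uniform-integrability step to kill the stochastic integral and to justify square-integrability of $q$ under $Q$ from the finiteness of the entropy) yields \eqref{eq:identity H}. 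The paper itself does not reprove this; it cites Step 1 of the proof of Theorem 5.6 in \cite{Dje-Gui-Wu}, where exactly this argument is carried out. None of these ingredients appear in your proposal, so there is a genuine gap: the Lemma is left unproved.
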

\begin{proof}[Proof of Proposition \ref{prop:sde no drift}]  
	That \eqref{eq:SDE intro} admits a unique strong solution follows from e.g. \cite[Theorem 2.1]{Bah99}.   
	Let $\nu \in \mathcal{P}(\Omega)$ be absolutely continuous with respect to $\mu$.
	We can assume without loss of generality that $H(\nu|\mu_x)<\infty$.
	Let $Q$ and $q$ be as in Lemma \ref{lem:girsanov}.
	Under the probability measure $Q$, the SDE \eqref{eq:SDE intro} takes the form
	\begin{equation}
	\label{eq:sde no drift q}
		dX(t)=\sigma(t,X(t))d\tilde{W}(t)+  \big\{ \sigma(t,X(t))q(t) + b(t, X(t)) \big\}dt,\quad \text{with} \quad X(0)=x
	\end{equation}
	and the law of $X$ under $Q$ is $\nu$.
	Furthermore, the SDE
	\begin{equation*}
		dY(t)=\sigma(t,Y(t))d\tilde{W}(t) + b(t, Y(t))\,dt,\quad \text{with } Y(0) =x
	\end{equation*}
	admits a unique solution whose law under $Q$ is $\mu_x$.
	That is, $(X,Y)$ under $Q$ is a coupling of $(\nu, \mu_x)$.
	Thus,
	\begin{equation}
	\label{eq:first estim wasser}
		\mathcal{W}^2_2(\nu, \mu_x) \le  E_{Q}\Big[\sup_{0\le t\le T}|X(t)-Y(t)|^2\Big].	
	\end{equation}

	We now estimate the right hand side above.
	By It\^o's formula, we have
	\begin{align}
	\nonumber
		|X(t) - Y(t)|^2 &= \int_0^t2(X(s) - Y(s))\sigma(s, X(s))q(s) + |\sigma(s, X(s)) - \sigma(s, Y(s))|^2\,ds\\\notag
		&\quad  + \int_0^t2(X(s) - Y(s))(b(s,X(s)) - b(s,Y(s)))\,ds\\
		&\quad +  \int_0^t2(X(s) - Y(s))(\sigma(s, X(s)) - \sigma(s, Y(s)))\,d\tilde W,
		\label{eq:first ito}
	\end{align}
	where we simply denote by $ab$ the inner product between two vectors $a$ and $b$.
	The difficulty is to deal with the terms $\sigma(s, X(s)) - \sigma(s, Y(s))$ and $b(s, X(s)) - b(s, Y(s))$.
	To that end, we introduce the following random times:
	First consider the sequence of stopping times
	\begin{equation*}
		\tau^N:= \inf\{ t>0: |X(t)|>N \text{ or } |Y(t)|>N\}\wedge T.
	\end{equation*}
	It is clear that $\tau^N\uparrow T$. 
	For each $\lambda$ in $[0, 1]$ and $t$ in $[0, T]$, we put $Z_t^{\lambda}:= \lambda X(t) + (1-\lambda) Y(t)$. 
	For every $N \ge 0$ and $ t\in [0,\infty)$, define 
	\begin{equation*}
		A^N(t) := \int_0^{t \wedge \tau^N}\int_{0}^{1}(|\partial_x\sigma(s, Z_s^{\lambda})|^2+|\partial_xb(s,Z_s^{\lambda})|)  d\lambda ds, 
	\end{equation*}
	where the (weak) derivative acts on the spacial variable. 
	The process $k^N(t):= t + A^N(t)$ is continuous, strictly increasing and $k^N(0) = 0$. 
	Moreover, $k^N$ maps $[0,\infty)$ onto itself. 
	We denote by $\gamma^N$ the unique inverse map of $k^N$.
	Using \eqref{eq:first ito}, Cauchy-Schwarz and Burkholder-Davis-Gundy inequalities, one can show that for each $t\in [0,\infty)$ it holds that  
	\begin{align}
	\nonumber
		&E_Q\Big[\sup_{s \in [0,\gamma^N_t\wedge\tau^N]}|X(s)-Y(s)|^2\Big] \\\notag
		&\quad\le  E_Q\Big[\int_0^{\gamma^N_t\wedge\tau^N}|X(s)-Y(s)|^2\, ds+ \|\sigma\|_\infty^2 \int_0^{\gamma^N_t\wedge\tau^N}|q(s)|^2ds\Big]\\\notag
		&\qquad+E_Q\Big[\int_0^{\gamma^N_t\wedge\tau^N}|\sigma(s,X(s))-\sigma(s,Y(s))|^2 +2(X(s) - Y(s))(b(s,X(s)) - b(s,Y(s)))\,ds \Big]\\\notag
		&\qquad +2C_{BDG}E_Q\Big[\Big(\int_{0}^{\gamma^N_t\wedge\tau^N}|X(s)-Y(s)|^2|\sigma(s,X(s))-\sigma(s,Y(s))|^2\,ds\Big)^{1/2}\Big]\\
		\label{eq:first diff estimate}
		&\quad := I_1 + I_2 + I_3,
	\end{align}
	for a (universal) constant $C_{BDG}>0$. 
	
	Let $\varepsilon>0$. By Young's inequality, we have
	\begin{align*}
		I_3 \le \varepsilon E_Q\Big[\sup_{s \in [0, \gamma^N_t\wedge \tau^N]}|X(s) - Y(s) |^2 \Big] + \frac{2C^2_{BDG}}{\varepsilon}I_2.
	\end{align*} 
	We shall estimate $I_2$. 
	 We denote by $K_N$  the ball $\{x\in\mathbb{R}^d, |x|\le N\}$. 
	For  $K:=[0,T]\times K_N$.  
	Let $b_n,\sigma_n\in C^\infty$ be such that, 
	 \begin{equation*}
	 	\|\sigma_n - \sigma\|_{W^{0,1}_{2(d+1)}(K)} \to 0 \quad \text{and} \quad \|b_n - b\|_{W^{0,1}_{d+1}(K)} \to 0.
	 \end{equation*} 
	 Using Krylov's estimate (\cite[Theorem 2.2.4]{Krylov}), we get  
	\begin{align*} 
		I_2  &\le  
		C_{T,N,d}\,\|\sigma-\sigma_n\|_{L^{2(d+1)}(K)} + C_{T,N,d}\,(\|b-b_n\|_{L^{(d+1)}(K)})\\
		&\quad + 2E_{Q}\Big[\int_0^{\gamma_t^N\wedge\tau^N}|X(s)-Y(s)|^2\int_{0}^{1}(|\partial_x\sigma_{n}(s, Z_s^{\lambda})|^2+|\partial_xb_n(s,Z_s^{\lambda})|)  d\lambda ds\Big]
	\end{align*} 
	where   $C_{T,N,d}$ is a positive constant which depends on $T, N$ and $d$. 
	Taking the limit as $n$ goes to infinity in the last inequality and using the fact that $dA^N(s) \leq dk^N(s$), we obtain
	\begin{align*}
		I_2 \le 2 E_Q\Big[\int_0^{\gamma_t^N\wedge\tau^N}|X(s)-Y(s)|^2\,dk^N(s) \Big].
	\end{align*}
	Therefore,
	\begin{align*}
		I_3 \le \varepsilon E_Q\Big[\sup_{s \in [0, \gamma^N_t\wedge \tau^N]}|X(s) - Y(s) |^2 \Big] + \frac{6C_{BDG}^2}{\varepsilon}E_Q\Big[\int_0^{\gamma_t^N\wedge\tau^N}|X(s)-Y(s)|^2\,dk^N(s) \Big].
	\end{align*}
	Coming back to \eqref{eq:first diff estimate}, since $k^N(t) := t + A^N(t)$, we have
	\begin{align*}
		E_Q\Big[\sup_{s \in [0, \gamma^N_t\wedge \tau^N]} |X(s)-Y(s)|^2\Big]   
		& \le  4E_Q\Big[\int_0^{\gamma^N_t\wedge\tau^N}|X(s)-Y(s)|^2\, dk^N(s) 
		 + \|\sigma\|_\infty^2 \int_0^{\gamma^N_t\wedge\tau^N}|q(s)|^2ds\Big] 
		\\ 
		&\quad + \varepsilon E_Q\Big[\sup_{s \in [0, \gamma^N_t\wedge \tau^N]}|X(s) - Y(s) |^2 \Big]\\
		&\quad + \frac{6C^2_{BDG} + 2\varepsilon}{\varepsilon}E_Q\Big[\int_0^{\gamma_t^N\wedge\tau^N}|X(s)-Y(s)|^2\,dk^N(s) \Big].
	\end{align*} 
	The time change $t\equiv \gamma^N_s$ gives
	\begin{align*}  
	E_Q\Big[\sup_{s \in [0, \gamma^N_t\wedge \tau^N]}|X(s)-Y(s)|^2\Big]  
		&\le  
		\|\sigma\|_\infty^2 E_Q\Big[\int_0^{\gamma^N_t\wedge\tau^N}|q(s)|^2ds\Big] 
		\\ 
		& \quad + \varepsilon E_Q\Big[\sup_{s \in [0, \gamma^N_t\wedge \tau^N]}|X(s) - Y(s) |^2 \Big]\\
		&\quad + 6\frac{C^2_{BDG}+\varepsilon}{\varepsilon}E_Q\Big[\int_0^{t}
		\sup_{r \in [0, \gamma^N_s\wedge \tau^N]}|X(s)-Y(s)|^2\,ds \Big].
	\end{align*}
	Choosing $\varepsilon<1$ then using Gronwall's lemma, we get 
	\begin{align*}
		E_Q\Big[\sup_{s \in [0, \gamma^N_t\wedge \tau^N]}|X(s)-Y(s)|^2\Big] 
		&\le \frac{1}{1-\varepsilon}\|\sigma\|_\infty^2 E_Q\Big[\int_0^{T}|q(s)|^2ds\Big]
		\exp\Big(6\frac{C^2_{BDG}+\varepsilon}{\varepsilon(1 - \varepsilon)}T \Big)
	\end{align*}
	where we also used the fact that $\tau^N\wedge \gamma^N_t\le T$. 
	Letting successively $t$ then $N$ go to infinity, it follows by using Fatou's lemma, $\gamma_t^N\uparrow \infty$, $\tau^N\uparrow T$ and the continuity of $X$ and $Y$ that 
	\begin{align*}
		E_Q\Big[\sup_{s \in [0,T]}|X(s)-Y(s)|^2\Big] &\le \exp\Big(6\frac{C^2_{BDG}+\varepsilon}{\varepsilon(1 - \varepsilon)}T \Big)\frac{1}{1-\varepsilon}\|\sigma\|_\infty^2 E_Q\Big[\int_0^{T}|q(s)|^2ds\Big].
	\end{align*}
	Hence, we conclude from \eqref{eq:identity H} and \eqref{eq:first estim wasser} that
	\begin{equation*}
		\mathcal{W}^2_2(\mu_x,\nu) \le 2\exp\Big(6\frac{C^2_{BDG}+\varepsilon}{\varepsilon(1 - \varepsilon)}T \Big)\frac{1}{1-\varepsilon}\|\sigma\|_\infty^2H(\nu|\mu_x).
	\end{equation*}	
	This concludes the proof.
\end{proof}  

\begin{remark}
	As it appears from the proof, it is conceivable that the above lemma extends to functions $\sigma$ in weighted Sobolev spaces when the set of smooth functions with compact support is dense.
	We restrict ourselves to $W^{0,1}_{2(d+1),\mathrm{loc}}([0,T])$ since this space is enough for our purpose and to simplify the presentation. 
	\end{remark} 

\subsection{Proof of Theorem \ref{thm:t2 sde}} 

We start by the case where condition (A) is fulfilled. 
By \cite[Theorem 3.1]{Bah99} equation \eqref{eq:SDE intro} admits a unique strong solution. 
As in  \cite {Bah99}, the idea consists in using Zvonkin's transform in order to transform equation \eqref{eq:SDE intro} into an SDE without drift then using Proposition \ref{prop:sde no drift} to conclude. 
In the rest of the paper, we denote by $\calL$ the differential operator defined by
\begin{equation*}
	\calL \phi:= b \partial_x \phi + \frac12 tr(\sigma^*\sigma \partial_{xx} \phi).
\end{equation*} 
According to \cite[Theorem 2]{Zvonkin}, there exists a $T >0$ small enough such that the PDE
\begin{equation*}
\begin{cases}
\partial_t\varphi + \calL \varphi  = 0\\
\varphi(T,x) = x
\end{cases}
\end{equation*}
	admits a unique solution $\varphi$ such that:  for every $t$, the function $x\mapsto \varphi(t, x)$ is one-to-one from $\mathbb{R}^d$ onto $\mathbb{R}^d$, both $\varphi$ and its inverse$\psi$ belong to  $W^{1,2}_{p,\mathrm{loc}}([0,T])$ for each $p > 1$,   both $\varphi(t,\cdot)$ as well as its inverse $\psi(t,\cdot)$  are Lipschitz continuous, with Lipschitz constants depending on $d,T, \|b\|_\infty$ and $\|\sigma\|_\infty$.  

Applying It\^o-Krylov's formula, see \cite[Theorem 2.10.1]{Krylov} to $\varphi(t, X_t) :=Y_t$, it follows that $Y$ satisfies the drift-less SDE
\begin{equation*}
Y_t = Y_0 + \int_0^t \tilde\sigma(s, Y_s)\,dW_s
\end{equation*}
with $\tilde \sigma(t,x):= (\sigma^*\partial_x \varphi) (t,\psi(t,y))$. 
 Since $\sigma$ belongs to $W^{0,1}_{2(d+1),\mathrm{loc}}([0,T])$, it follows that 
  $\varphi$ belongs to 
 $ W^{1,2}_{p,\mathrm{loc}}([0,T])$  for each $p>1$ and both  $\varphi$ and $\psi$ are Lipschitz,   it follows that  
$\tilde\sigma \in W^{0,1}_{2(d+1),\mathrm{loc}}([0,T])$. 
Hence, by Lemma \ref{prop:sde no drift}, the law $\mu_y$ of Y satisfies $T_2(C)$, where $C$ is the constant in Proposition \ref{prop:sde no drift}.
But $X_t = \psi(t, Y_t)$ and $\psi$ is Lipschitz continuous.
Thus, the result follows from \cite[Lemma 2.1]{Dje-Gui-Wu}.

\vskip 0.4cm 

We now  assume that condition (B) is fulfilled.
We need to introduce the following Banach spaces:
For every $k \ge0$ and $m\ge1$, let $H^k_m:= (I - \Delta)^{-k/2}L^m$ be the usual space of Bessel potentials on $\mathbb{R}^d$ and denote
\begin{equation*}	
	\mathbb{H}^{2,q}_p([0,T]) := L^q([0,T], H^2_p)\quad \text{and}\quad H^{2,q}_{p}([0,T]):= \{u:[0,T]\to H^2_p\text{ and } \partial_tu \in \Lpq([0,T])\}.
\end{equation*}
The space $H^{2}_p$ is equipped with the norm
\begin{equation*}
	\| u \|_{H^2_p}:= \| (I - \Delta)u \| _{L^p}
\end{equation*}	
making it isomorphic to the Sobolev space $W^2_p(\mathbb{R}^d)$.

Under assumption (B), the existence and uniqueness of $X$ follow e.g. from \cite[Theorem 1.1]{XichZhang11}.
We now show that the law $\mu_x$ of $X$ satisfies $T_2(C)$ for some $C$.
Let $C_b$ be a constant to be determined later.
	By \cite[Theorem 10.3 and Remark 10.4]{KR05}, the PDE
	\begin{equation*}
		\begin{cases}
		\partial_tu^i + \calL u^i + \frac{b^i}{1 +C_b} = 0\\
		u^i(T,x) = 0\end{cases}
	\end{equation*}
	admits a unique solution $u^i \in H^{2,q}_p([0,T])$ and this solution satisfies
	\begin{equation*}
	 	||\partial_t u^i||_{\Lpq} + ||u^i||_{\Hpq} \le \frac{C_1}{1+C_b} ||b^i||_{\Lpq }
	 \end{equation*} 
	 for some constant $C_1$ depending on $d, p,q,T$ and $||b||_{\Lpq}$.
	 Furthermore, since $d/p + 2/q<1$, it follows by \cite[Lemma 10.2]{KR05} that 
	 
	 \begin{equation*}
	 	|\partial_xu^i| \le C_2T^{-1/q}\left(||u^i||_{\Hpq} + T||\partial_tu^i||_{\Lpq}^{} \right)
	 \end{equation*}
	 with $\delta \in (0,1]$ such that $2\delta + \frac dp + \frac2q <2$, and $C_2$ a constant depending on $p,q$ and $\delta$. 
	 Therefore, it holds that
	 
	 \begin{equation}
	 \label{eq:grad estim u lp}
	 	|\partial_xu^i| \le C_1C_2T^{-1/q}(T + 1)\frac{1}{1 + C_b}||b^i||_{\Lpq} \le \frac{C_b}{1 + C_b}
	 \end{equation}
	 with the choice $C_b:= C_1C_2T^{-1/q}(T + 1)\max_{i\in \{1, \dots,d\}}||b^i||_{\Lpq}$.
	 Now consider the function $\Phi^i(t,x):={x^i} + u^i(t,x) $, $i = 1,\dots, d$.
	It is easily checked that the function $\Phi^i$ solves the PDE
	 \begin{equation}
	 \label{eq:pde phi}
		\begin{cases}
	 		\partial_t\Phi^i + \calL \Phi^i = 0\\
	 		\Phi^i(T,x) = {x^i}.
	 	\end{cases}
	 \end{equation}
	 Put $\Phi(t, x)= (\Phi^1(t,x),\dots, \Phi^d(t,x))$.
	 Due to \eqref{eq:grad estim u lp}, it holds that
	 \begin{equation*}
	 	\frac{1}{1 + C_b} |x - y| \le |\Phi(x) - \Phi(y)| \le \frac{1 + 2C_b}{1 + C_b}|x - y|\quad \text{for all } x,y \in \mathbb{R}^d.
	 \end{equation*}
	 As a consequence, $\Phi$ is one-to-one, (see e.g. the corollary on page 87 of \cite{John}), and its inverse $\Psi := \Phi^{-1}$ is $\frac{1}{1+C_b}$-Lipschitz continuous.

	Since for every $t$, $u(t,\cdot)$ belongs to $H^{2}_{p}$, then it can be seen as an element of $W^2_{p}(\mathbb{R}^d)$. 
	Moreover, the derivative of $u$ with respect to $t$ belongs to $L^p$, it thus follows that $u$ belongs to $W^{1,2}_{p}([0,T])$. Hence, the function $\Phi(t,x):={x} + u(t,x)$  belongs to $W^{1,2}_{p, loc}([0,T])$.  Itô-Krylov's formula applied to $\Phi$ gives 
	\begin{align*}
	  	Y_t :=\Phi(t,X_t) &= \Phi(0,x) + \int_0^t(\partial_t\Phi + \calL \Phi)(s,X_s)\,ds + \int_0^t\partial_x\Phi (s,X_s)\sigma\,dW_s\\
	  	&= \Phi(0,x) + \int_0^t\tilde\sigma(s, Y_s)\,dW_s
	  \end{align*}
	with $\tilde\sigma(t,y) := (\sigma^*\partial_x \Phi)(t, \Psi(t,y))$, and where the second equation follows by \eqref{eq:pde phi}.
	
	The rest of the proof follows as in the case of assumption (A).
\hfill$\Box$

\section{Examples}
Let us now present a few examples of multidimensional diffusion models with non-Lipschitz coefficients which fit to our framework.
\subsection{Particles interacting through their rank}
Let $W^1, \dots W^n$ be $n$ independent Brownian motions.
Rank-based interaction models are given by
\begin{equation*}
	dX^{i,n}(t) = \sum_{j=1}^n\delta_j1_{\{X^{i,n}(t) = X^{(i),n}(t)\}}\,dt  + \sigma^i(t)\,dW^i(t)\quad X^{i,n}(0) = x^i
\end{equation*}
for some real numbers $\delta_j$, some measurable, bounded functions $\sigma^i$,   with
$X^{(1),n}(t)\le X^{(2),n}(t)\le \dots\le X^{(n), n}(t)$ is the system in increasing order.
More generally, this model can be written as
\begin{equation*}
	dX^{i,n}(t) = b\Big(\frac1n\sum_{j=1}^n1_{\{X^{n,j}(t)\le X^{n,i}(t)\}}\Big)\,dt + \sigma^i(t)\,dW^i(t)\quad X^{i,n}(0) = x^i
\end{equation*}
for a given (deterministic) functions $b$.
This model was introduced by \citet{Fern-Kara09} in the context of stochastic portfolio theory.
Concentration of measures results for such systems can be found in \cite{Pal-Shkol14}.
When $0<c\le \inf_{i,t}|\sigma^i(t)|\le \sup_{i,t}|\sigma^i(t)|\le C$ for some $c, C$ and $b \in L^\infty$ or $b\in L^p(\mathbb{R},dx)$ (with appropriate $p,d$), our main result shows that the law of $(X^{1, n}, \dots, X^{n,n})$ satisfies $T_2(C)$ for some $C>0$.
This result is also valid for the so-called (finite) Atlas model of \cite{Ban-Fer-Kar05} given by
\begin{equation*}
	dX^{i,n}(t) = \sum_{j=1}^n\delta1_{\{X^{i,n}(t) = X^{p_i,n}(t)\}}\,dt  + \sigma^i(t)\,dW^i(t) \quad X^{i,n}(0) = x^i,
\end{equation*}
for some constant $\delta$ and a permutation $(p_1, \dots, p_n)$ of $(1, \dots, n)$.

\subsection{Particles in quantile interaction}
Quantile interaction models are given by
\begin{equation*}
	dX^{i,n}(t) = b(t,X^{n,i}(t),V^{\alpha,n}(t))\,dt + \sigma(t,X^{n,i}(t))\,dW^i(t)  \quad X^{i,n}(0) = x^i,
\end{equation*}
where $V^{\alpha,n}(t)$ is the quantile at level $\alpha  \in [0,1]$ of the empirical measure of the system\\ $(X^{1,n}(t), \dots, X^{n,n}(t))$.
That is,
\begin{equation*}
	V^{\alpha, n}(t) := \inf\Big\{u \in \mathbb{R}: \frac1n\sum_{i =1}^n1_{\{X^{i,n}(t)\le u\}}\ge \alpha \Big\}.
\end{equation*} 
This model is considered for instance in \cite{Cri-Kur-Lee14} in connection to exchangeable particle systems.
Theorem \ref{thm:t2 sde} can be applied to this case under integrability conditions on $b$ and mild regularity conditions $\sigma$.

\subsection{Brownian motion with random drift}
In addition to particle systems, our main result can also allow to derive transportation inequalities for semimartingales.
We illustrate this in the next corollary.
Let $g$ be a progressive stochastic process.
We call Brownian motion with drift the process
\begin{equation}
\label{eq:bm with drift}
	X(t) = x + \int_0^tg(s)\,ds + \sigma W(t).
\end{equation}
We have the following corollary of Theorem \ref{thm:t2 sde}:
\begin{corollary}
	Assume that the constant matrix $\sigma$ satisfies \eqref{eq:elliptic}.
	If the drift $g$ is bounded and $T$ small enough, then the law $\mu_t^x$ of $X_t$ given by \eqref{eq:bm with drift} satisfies $T_2(C)$ for some $C>0$ depending on $T, \sigma, d$ and $\|g\|_\infty$. 
\end{corollary}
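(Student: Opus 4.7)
The plan is to combine Proposition~\ref{prop:sde no drift} applied to the drift-less SDE $dY = \sigma\,dW$ with a Girsanov transformation that absorbs the random drift~$g$, then to push forward to the time-$t$ marginal, and finally to transfer the $T_2$ inequality between the two laws.

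Since $g$ is bounded and $\sigma$ is a constant non-degenerate matrix, Novikov's criterion guarantees that
\[
Z_T := \exp\Big(-\int_0^T \sigma^{-1}g(s)\cdot dW_s - \tfrac12\int_0^T |\sigma^{-1}g(s)|^2\,ds\Big)
\]
is a $P$-martingale; set $Q := Z_T\cdot P$. Under $Q$, the process $\tilde W_t := W_t + \int_0^t \sigma^{-1}g(s)\,ds$ is a standard Brownian motion, and \eqref{eq:bm with drift} reduces to the drift-less form $X_t = x + \sigma\tilde W_t$; in particular, the $Q$-law of $X_t$ is the Gaussian $N(x,\sigma\sigma^\ast t)$.

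Next, the drift-less SDE $dY = \sigma\,dW$ falls under the hypotheses of Proposition~\ref{prop:sde no drift}: the constant matrix $\sigma$ is bounded, lies in $W^{0,1}_{2(d+1),\mathrm{loc}}([0,T])$ (all weak spatial derivatives vanish), and is elliptic by hypothesis. This yields $T_2(C_0)$ for the $Q$-path-law of $X$ on $(C([0,T],\mathbb{R}^d),\|\cdot\|_\infty)$ with $C_0=C_0(T,\sigma)$; via the $1$-Lipschitz evaluation map $\omega\mapsto\omega(t)$ and \cite[Lemma~2.1]{Dje-Gui-Wu}, the $Q$-marginal $(X_t)_\ast Q$ also satisfies $T_2(C_0)$.

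Finally, I transfer $T_2$ from the $Q$-marginal to $\mu_t^x=(X_t)_\ast P$. Since $g$ is bounded, $dP/dQ = 1/Z_T$ has moments of every order under $Q$ and the relative entropy $H(P|Q) \le \tfrac12\|\sigma^{-1}\|^2\|g\|_\infty^2 T$ is finite and explicit; combined with the $T_2$ inequality for the Gaussian $(X_t)_\ast Q$, e.g.\ via the Bobkov-G\"otze variational characterization applied to $1$-Lipschitz test functions together with Cauchy-Schwarz on $1/Z_T$, this yields $T_2(C)$ for $\mu_t^x$ with $C$ depending on $T,\sigma,d$ and $\|g\|_\infty$; the smallness of $T$ is what keeps the exponential-moment bounds on $1/Z_T$ well-controlled. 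The main obstacle is precisely this transfer step, since $T_2$ is not in general preserved by absolutely continuous changes of measure: one must exploit the explicit Girsanov structure of the density together with the $L^\infty$-bound on $g$ and the smallness of~$T$ to conclude.
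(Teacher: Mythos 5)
Your reduction to the drift-less equation via Girsanov is fine as far as it goes, but the argument breaks down exactly where you flag it: the transfer of $T_2$ from $(X_t)_\ast Q$ back to $\mu_t^x=(X_t)_\ast P$. The density $dP/dQ=1/Z_T$ is an exponential of a stochastic integral and is \emph{unbounded} even for bounded $g$, so no Holley--Stroock-type perturbation is available; more importantly, $T_2$ is simply not stable under absolutely continuous changes of measure with finite relative entropy, and the tool you invoke does not apply. The Bobkov--G\"otze variational characterization in terms of exponential moments of $1$-Lipschitz functions characterizes the \emph{first-order} inequality $T_1$, not $T_2$; the dual description of $T_2$ (Bobkov--Gentil--Ledoux) involves infimum-convolutions over all bounded measurable functions and does not interact with a Cauchy--Schwarz bound on $1/Z_T$ in the way you suggest. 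Knowing that $(X_t)_\ast Q$ is Gaussian and that $H(P|Q)<\infty$ gives you, at best, a transport--entropy bound for the single pair $(P,Q)$, not the inequality $\mathcal{W}_2(\mu_t^x,\nu)\le\sqrt{C\,H(\nu|\mu_t^x)}$ uniformly over all $\nu$. So the final step is a genuine gap, not a technicality.

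The paper's route avoids this entirely by a mimicking (Markovian projection) argument: setting $b(t,x):=E[g(t)\,|\,X(t)=x]$, the result of \cite[Corollary 3.7]{Bru-Sch03} shows that the time-$t$ marginal $\mu_t^x$ coincides with the marginal of the Markovian SDE $d\tilde X(t)=b(t,\tilde X(t))\,dt+\sigma\,dW(t)$. Since $g$ is bounded, $b$ is bounded and measurable, $\sigma$ is a constant elliptic matrix, and $T$ is small, so Theorem \ref{thm:t2 sde} under assumption (A) applies directly to $\tilde X$ and gives $T_2(C)$ for its path law, hence for its time-$t$ marginal. This is the key idea your proposal is missing: replace the random drift by its conditional expectation given the current state, rather than trying to remove it by a change of measure and then undo the change of measure at the level of the transportation inequality.
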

\begin{proof}
	Consider the Borel measurable function
	\begin{equation*}
		b(t, x): = E[g(t)| X(t)=x ].
	\end{equation*}
	By \cite[Corollary 3.7]{Bru-Sch03}, we have $\mu_t^x = \tilde \mu_t$, where $\tilde \mu_t$ is the law of the weak solution $\tilde X_t$ of the SDE 
	\begin{equation}
	\label{eq:sde bounded drift}
		\tilde X(t) = x + \int_0^tb(s, \tilde X(s))\,ds + \sigma W(t).
	\end{equation}
	Since $g$ is bounded so is the function $b$.
	Thus, the SDE \eqref{eq:sde bounded drift} admits a unique strong solution, see e.g. \cite{Bah99} or \cite{Ver81}.
	Thus, $\tilde X$ is necessarily a strong solution and by Theorem \ref{thm:t2 sde} $\tilde \mu$ satisfies $T_2(C)$, which concludes the argument.
\end{proof}
\bibliographystyle{abbrvnat}
 \bibliography{bib_t2bsde}

 \vspace{1cm}

\noindent Khaled Bahlali: IMATH, Universit\'e de Toulon, EA 2134, 83957 La Garde Cedex, France.\\
 {\small\textit{E-mail address:} : khaled.bahlali@univ-tln.fr.}\\

\noindent Soufiane Mouchtabih: LIBMA, Department of Mathematics, Faculty of Sciences Semlalia, Cadi Ayyad University, 2390 Marrakesh, Morocco.\\
and\\
IMATH, Universit\'e  de Toulon, EA 2134, 83957 La Garde Cedex, France.\\
Acknowledgment: supported by PHC Toubkal 18/59.\\
{\small\textit{E-mail address:} : soufiane.mouchtabih@gmail.com}\\

 \noindent Ludovic Tangpi: Department of Operations Research and Financial Engineering, Princeton University, Princeton, 08540,
 NJ; USA.
 \\
 {\small\textit{E-mail address:} ludovic.tangpi@princeton.edu}\\

\end{document}